\documentclass[11pt,a4paper,headinclude,footinclude,fleqn]{scrartcl}                 
\usepackage[T1]{fontenc}                   
\usepackage[utf8]{inputenc}                 
\usepackage[english]{babel}       
\usepackage{graphicx}                      %
\usepackage[font=small]{quoting}            %
\usepackage{caption}                  %
\usepackage[nochapters,beramono,eulermath,%
            pdfspacing,
            listings,
            ]{classicthesis}                %
\usepackage{arsclassica}                    
\usepackage[top=.8in,bottom=1.2in,left=1.2in,right=1.2in]{geometry}
\usepackage[english]{babel}
\usepackage{verbatim}
\usepackage{color}
\usepackage{picture}
\usepackage{amsmath,amsthm,amsfonts,amssymb}
\usepackage{comment}
\usepackage{mathtools}

\usepackage{titling} 
\usepackage[hyperpageref]{backref}


\newtheorem{thm}{Theorem}[section]
\newtheorem{lem}[thm]{Lemma}

\newtheorem{prop}[thm]{Proposition}

\theoremstyle{definition}
\newtheorem{rem}[thm]{Remark}
\theoremstyle{remark}

\newcommand{\ds}{\displaystyle}

\newcommand{\R}{\mathbb{R}}
\newcommand{\N}{\mathbb{N}}

\newcommand{\de}{\partial}
\newcommand{\eps}{\varepsilon}

\newenvironment{sistema}%
{\left\{\begin{array}{@{}l@{}}}{\end{array}\right.}
\patchcmd{\abstract}{\scshape\abstractname}{\textbf{\abstractname}}{}{}
\makeatletter 
\def\@makefnmark{} 
\makeatother 
\pagestyle{scrheadings}

\title{A saturation phenomenon\\ for a nonlinear nonlocal eigenvalue problem}
\author{Francesco Della Pietra%
\thanks{Universit\`a degli studi di Napoli Federico II, Dipartimento di Matematica e Applicazioni ``R. Caccioppoli'', Via Cintia, Monte S. Angelo - 80126 Napoli, Italia. Email: f.dellapietra@unina.it} 
{, }\\
Gianpaolo Piscitelli%
\thanks{Universit\`a degli studi di Napoli Federico II, Dipartimento di Matematica e Applicazioni ``R. Caccioppoli'', Via Cintia, Monte S. Angelo - 80126 Napoli, Italia. Email: gianpaolo.piscitelli@unina.it}
}

\begin{document}

\maketitle


\begin{abstract}
Given $1\le q \le 2$ and $\alpha\in\R$, we study the properties of the solutions of the minimum problem 
\[
\lambda(\alpha,q)=\min\left\{\dfrac{\ds\int_{-1}^{1}|u'|^{2}dx+\alpha\left|\int_{-1}^{1}|u|^{q-1}u\, dx\right|^{\frac2q}}{\ds\int_{-1}^{1}|u|^{2}dx}, u\in H_{0}^{1}(-1,1),\,u\not\equiv 0\right\}.
\]
In particular, depending on $\alpha$ and $q$, we show that the minimizers have constant sign up to a critical value of $\alpha=\alpha_{q}$, and when $\alpha>\alpha_{q}$ the minimizers are odd.\\

\noindent MSC: 49R50, 26D10
\end{abstract}

\section{Introduction}
In this paper we consider the following problem:
\begin{equation}\label{operat}
\lambda(\alpha,q)=\inf \left\{ \mathcal Q[u,\alpha],\; u\in H_0^1(-1,1),\,u\not\equiv 0 \right\},
\end{equation}
where $\alpha\in\R$, $1\le q \le 2$ and
\[
\mathcal{Q}[u,\alpha]:=\dfrac{\ds\int_{-1}^{1}|u'|^{2}dx+\alpha\left|\int_{-1}^{1}|u|^{q-1}u\, dx\right|^{\frac2q}}{\ds\int_{-1}^{1}|u|^{2}dx}.
\]
Let us observe that $\lambda(\alpha,q)$ is the optimal value in 
the inequality
\begin{equation*}
\lambda(\alpha,q)\int_{-1}^{1}|u|^{2}dx\le{\ds\int_{-1}^{1}|u'|^{2}dx+\alpha\left|\int_{-1}^{1}|u|^{q-1}u\, dx\right|^{\frac2q}}{\ds}.
\end{equation*}
which holds for any $u\in H_0^1(-1,1)$.
Moreover, in the local case ($\alpha=0$), this inequality reduces to the classical one-dimensional Poincar\'e inequality; in particular, 
\[
\lambda(0,q)=\frac{\pi^2}{4}
\]
for any $q$.

The minimization problem \eqref{operat} leads, in general, to a nonlinear nonlocal eigenvalue problem. Indeed, supposing $\int_{-1}^1 y|y|^{q-1}\  dx\ge 0$, the associated Euler-Lagrange equation is
\begin{equation}
\left\{
\begin{array}{ll}
-y'' + \alpha\left(\ds\int_{-1}^1 y|y|^{q-1}\  dx\right)^{\frac{2}{q}-1} |y|^{q-1}=\lambda(\alpha,q)\, y& \text{in}\ ]-1,1[\\[.4cm]
y(-1)=y(1)=0
\end{array}
\right.
\end{equation}
(see Section 2 for its precise statement).

 This kind of nonlocal problems, in the one dimensional case, as in the multidimensional one, have been studied by several authors in many contexts, as, for example, reaction-diffusion equations describing chemical processes (see \cite{F}, \cite{S}) or Brownian motion with random jumps (see \cite{P}). Other results can be found, for example, in \cite{AB}, \cite{BCGM}, \cite{BFNT}, \cite{D}, \cite{F0}, \cite{Pi}.

The purpose of this paper is to study some properties of $\lambda(\alpha,q)$. In particular, depending on $\alpha$ and $q$, we aim to prove symmetry results for the minimizers of \eqref{operat}.

Under this point of view, in the multidimensional case ($N\ge 2$) the problem has been settled out in \cite{BFNT} (when $q=1$) and in \cite{D} (when $q=2$).

Our main result is stated in Theorem \ref{mainthm1} below. In particular, the nonlocal term affects the minimizer of problem \eqref{operat} in the sense that it has constant sign up to a critical value of $\alpha$ and, for $\alpha$ larger than the critical value, it has to change sign, and a saturation effect occurs.
\begin{thm}
\label{mainthm1}
Let $1\le q \le 2$. There exists a positive number $\alpha_{q}$ such that:
\begin{enumerate}
\item if $\alpha<\alpha_{q}$, then
\[
\lambda(\alpha,q)<\pi^{2},
\]
and any minimizer $y$ of $\lambda(\alpha,q)$ has constant sign in $]-1,1[$.
\item If $\alpha\ge\alpha_{q}$, then
\[
\lambda(\alpha,q)= \pi^{2}.
\]
Moreover, if $\alpha>\alpha_{q}$, the function {$y(x)=\sin\pi x$, $x\in[-1,1]$, is the only minimizer, up to a multiplicative constant, of $\lambda(\alpha,q)$. Hence it is odd, $\int_{-1}^{1} |y(x)|^{q-1}y(x)\,dx=0$, and $\overline x=0$ is the only point in $]-1,1[$ such that $y(\overline x)=0$.
}
\end{enumerate}
\end{thm}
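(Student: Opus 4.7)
The plan is to (i) establish the universal upper bound $\lambda(\alpha,q)\le\pi^2$ together with monotonicity in $\alpha$ in order to define $\alpha_q$, (ii) prove a rigidity statement that any sign-changing minimizer must be a scalar multiple of $\sin(\pi x)$, and (iii) combine these to read off the dichotomy.

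\textit{Upper bound and critical value.} Since $\sin(\pi x)\in H_0^1(-1,1)$ is odd, $\int_{-1}^{1}|\sin(\pi x)|^{q-1}\sin(\pi x)\,dx=0$ and so $\mathcal Q[\sin(\pi x),\alpha]=\pi^2$ for every $\alpha$, giving $\lambda(\alpha,q)\le\pi^2$. The map $\alpha\mapsto\lambda(\alpha,q)$ is continuous and nondecreasing on $[0,+\infty)$, with $\lambda(0,q)=\pi^2/4$. I would then set
\[
\alpha_q:=\inf\{\alpha\ge 0\;:\;\lambda(\alpha,q)=\pi^2\}.
\]
Continuity together with $\lambda(0,q)<\pi^2$ yields $\alpha_q>0$. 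To see $\alpha_q<\infty$, note that for any $u\ge 0$ in $H_0^1(-1,1)$ with $u\not\equiv 0$, one has $\int_{-1}^{1}u^q\,dx>0$, and a Gagliardo--Nirenberg-type estimate forces $\inf\{\mathcal Q[u,\alpha]:u\ge 0,\ u\not\equiv 0\}\to+\infty$ as $\alpha\to+\infty$; once this infimum exceeds $\pi^2$, the minimum of $\mathcal Q[\cdot,\alpha]$ must be realized only by sign-changing competitors, saturating the upper bound $\pi^2$.

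\textit{Rigidity.} The technical core is to prove: if $y$ is a minimizer of \eqref{operat} that changes sign, then $y=c\sin(\pi x)$ for some $c\ne 0$, whence $\lambda(\alpha,q)=\pi^2$. Assume without loss of generality that $B:=\int_{-1}^{1}y|y|^{q-1}\,dx\ge 0$, and set $K:=\alpha B^{2/q-1}\ge 0$. The Euler--Lagrange equation can be written $-y''+K|y|^{q-1}=\lambda(\alpha,q)y$; on $\Omega^+:=\{y>0\}$ it reads $-y''+Ky^{q-1}=\lambda(\alpha,q)y$, while on $\Omega^-:=\{y<0\}$, setting $w:=-y>0$, it reads $-w''=\lambda(\alpha,q)w+Kw^{q-1}$. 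Testing the first equation against the principal Dirichlet eigenfunction on $\Omega^+$ and the second against the one on $\Omega^-$ yields
\[
\frac{\pi^2}{|\Omega^+|^2}\;\le\;\lambda(\alpha,q)\;\le\;\frac{\pi^2}{|\Omega^-|^2},
\]
with strict inequalities if $K>0$. Combined with $\lambda(\alpha,q)\le\pi^2$ and $|\Omega^+|+|\Omega^-|\le 2$, these force $|\Omega^+|\ge 1\ge|\Omega^-|$. To strengthen these to the equalities $|\Omega^\pm|=1$ and $K=0$, I would introduce a competitor $\tilde y$ obtained by rescaling the nodal pieces $y^\pm$ onto intervals of length $1$ and tuning their relative amplitude so that the nonlocal term of $\mathcal Q[\tilde y,\alpha]$ vanishes; a direct comparison shows $\mathcal Q[\tilde y,\alpha]<\mathcal Q[y,\alpha]$ unless $K=0$ and $|\Omega^\pm|=1$, contradicting minimality. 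With $K=0$, the equation reduces to $-y''=\lambda(\alpha,q)y$, so $y$ is a Dirichlet eigenfunction, and being sign-changing it must be the second one, $y=c\sin(\pi x)$, with $\lambda(\alpha,q)=\pi^2$.

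\textit{Conclusion and main obstacle.} For $\alpha<\alpha_q$, the strict inequality $\lambda(\alpha,q)<\pi^2$ rules out $y=c\sin(\pi x)$ as a minimizer, so by rigidity every minimizer has constant sign. For $\alpha\ge\alpha_q$, continuity and the upper bound give $\lambda(\alpha,q)=\pi^2$. For $\alpha>\alpha_q$, the strict monotonicity of $\inf\{\mathcal Q[u,\alpha]:u\ge 0\}$ past $\pi^2$ excludes constant-sign minimizers, so any minimizer is sign-changing and hence, by rigidity, equals $c\sin(\pi x)$; the oddness, $\int_{-1}^{1}|y|^{q-1}y\,dx=0$, and the uniqueness of the interior zero at $\overline x=0$ all follow at once. \emph{The main obstacle} is the last step of the rigidity argument: the Picone-type eigenvalue comparisons quickly yield the weak bound $|\Omega^+|\ge|\Omega^-|$, but upgrading this to the strict equalities $|\Omega^\pm|=1$ and $K=0$ for any sign-changing minimizer demands a careful construction of a lower-energy rescaled competitor, in which the Hölder exponent $q\in[1,2]$ enters nontrivially.
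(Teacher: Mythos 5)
Your overall architecture (the upper bound $\lambda(\alpha,q)\le\pi^{2}$ via $\sin\pi x$, monotonicity and continuity in $\alpha$ to define $\alpha_{q}$, a compactness argument to exclude constant-sign minimizers for large $\alpha$, and a perturbation in $\alpha$ to handle $\alpha>\alpha_{q}$) matches the paper's, and those ingredients are sound. The genuine gap is the rigidity step, which you correctly identify as the main obstacle but do not close --- and it is precisely the technical heart of the theorem. First, the statement you set out to prove, namely that \emph{every} sign-changing minimizer is a multiple of $\sin\pi x$, is false as stated: for $q=1$ and $\alpha=\alpha_{1}=\pi^{2}/2$ the paper exhibits the family $y_{A}(x)=\frac{A}{2}(1+\cos\pi x)-\sqrt{1-A}\,\sin\pi x$ of sign-changing minimizers with nonzero average and asymmetric nodal intervals (Theorem \ref{mainthm2}, part 1), so no competitor construction can strictly beat a sign-changing minimizer with $K>0$ and $|\Omega^{\pm}|\ne 1$ in general. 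What is actually needed is the weaker implication that a sign-changing minimizer forces $\lambda(\alpha,q)=\pi^{2}$ (this gives part 1 by contraposition), together with the vanishing of the nonlocal term for $\alpha>\alpha_{q}$ (obtained by the perturbation argument), after which $y=C\sin\pi x$ follows trivially from $-y''=\pi^{2}y$.

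Second, neither of your two mechanisms delivers the key inequality $\lambda(\alpha,q)\ge\pi^{2}$ for sign-changing minimizers. The Picone-type comparison on nodal intervals gives $|\Omega^{+}|\ge\pi/\sqrt{\lambda}$ and $|\Omega^{-}|\le\pi/\sqrt{\lambda}$; combined with $|\Omega^{+}|+|\Omega^{-}|\le 2$ and $\lambda\le\pi^{2}$ this is consistent with any value $\lambda\ge\pi^{2}/4$ and produces no contradiction. The rescaled competitor fares no better: compressing $y^{+}$ from an interval of length $L^{+}\ge 1$ onto one of length $1$ multiplies its Rayleigh quotient by $(L^{+})^{2}\ge 1$, and after tuning amplitudes so that the nonlocal term vanishes, $\mathcal Q[\tilde y,\alpha]$ is a mediant of two one-interval Rayleigh quotients, each at least $\pi^{2}$ by the Poincar\'e inequality on unit intervals; hence $\mathcal Q[\tilde y,\alpha]\ge\pi^{2}\ge\lambda(\alpha,q)$, which is the wrong direction and yields nothing. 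The paper's route is different in kind: it integrates the Euler--Lagrange equation once to obtain $(y')^{2}=\lambda\,[\,1-z(\bar m,q)(1-|y|^{q-1}y)-y^{2}\,]$, deduces the exact length identity $\lambda=H(\bar m,q)^{2}$ for a normalized sign-changing minimizer with minimum $-\bar m$ (Proposition \ref{cambiosegno0}), and then proves the analytic inequality $H(m,q)\ge\pi$, with equality only if $q=1$ or $m=1$, by a delicate monotonicity-in-$q$ analysis of the integrand (Proposition \ref{propminimo} and Lemma \ref{lemma-monotonia}). Some quantitative substitute for this step is indispensable; as written, your argument establishes the definition of $\alpha_{q}$ and the soft parts of both assertions, but neither the saturation $\lambda(\alpha,q)=\pi^{2}$ for $\alpha\ge\alpha_{q}$ nor the constant-sign property for $\alpha<\alpha_{q}$.
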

Some additional informations are given in the next result.
\begin{thm}
\label{mainthm2}
The following facts hold.
\begin{enumerate}
\item For $q=1$, then $\alpha_{1}=\frac{\pi^{2}}{2}$. Moreover, if $\alpha=\alpha_{1}$, there exists a positive minimizer of $\lambda(\alpha_1,1)$, and for any $\overline x\in]-1,1[$ there exists a minimizer $y$ of $\lambda(\alpha_1,1)$ which changes sign in $\overline x$, non-symmetric and with $\int_{-1}^{1}y(x)\,dx\ne 0$ when $\overline x\ne 0$.
\item If $1<q\le 2$ and $\alpha=\alpha_{q}$, {then $\lambda(\alpha_q,q)$ in $[-1,1]$ admits both a positive minimizer and the minimizer $y(x)=\sin\pi x$, up to a multiplicative constant. Hence, any minimizer has constant sign or it is odd.}
\item If $q=2$, then $\alpha_{2}=\frac{3}{4}\pi^{2}$.
\end{enumerate}
\end{thm}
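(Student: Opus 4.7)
My plan is to reduce all three claims to Theorem~\ref{mainthm1} combined with explicit analysis of the positive Euler--Lagrange solutions: $\alpha_q$ is precisely the threshold at which the Rayleigh quotient of the positive minimizer first reaches $\pi^2$. Parts (1) and (3) then reduce to solving a linear ODE, while part (2) requires a compactness argument to lift the positive minimizer up to the critical value $\alpha_q$.

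For part (3), $q=2$: on positive $u$ the nonlocal term equals $\int u^2$, so $\mathcal Q[u,2]=\int(u')^2/\int u^2+\alpha$. Its infimum over positive $u\in H_0^1(-1,1)$ is $\pi^2/4+\alpha$, attained by $\cos(\pi x/2)$. Setting this equal to $\pi^2$ (the value attained by $\sin\pi x$, whose nonlocal term vanishes by oddness) forces $\alpha_2=3\pi^2/4$.

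For part (1), $q=1$: the nonlocal term is $\alpha|\int u|^2$, and among positive symmetric candidates the Euler--Lagrange equation reads $-y''+c=\lambda y$ with $c:=\alpha\int y$. The general solution together with the self-consistency $c=\alpha\int y$ gives the characteristic equation $\lambda=2\alpha(1-\tan\sqrt\lambda/\sqrt\lambda)$; imposing $\lambda=\pi^2$ yields $\alpha_1=\pi^2/2$ and the positive minimizer $1+\cos\pi x$. To exhibit sign-changing minimizers at $\alpha_1$, I would use the two-parameter family $y=A(1+\cos\pi x)+B\sin\pi x$: a direct computation gives $\int(y')^2=\pi^2(A^2+B^2)$, $\bigl|\int y\bigr|^2=4A^2$ and $\int y^2=3A^2+B^2$, so $\mathcal Q[y,\pi^2/2]=\pi^2$ for every $(A,B)\ne(0,0)$. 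For any $\overline x\in\,]-1,1[\setminus\{0\}$, the choice $A\ne 0$, $B=-A(1+\cos\pi\overline x)/\sin\pi\overline x$ produces a minimizer vanishing at $\overline x$, with $\int y=2A\ne 0$ and non-symmetric (its $\sin$-part is non-zero).

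For part (2), $1<q\le 2$ at $\alpha=\alpha_q$: the function $\sin\pi x$ is a minimizer because its nonlocal term is zero by oddness and its Dirichlet ratio equals $\pi^2=\lambda(\alpha_q,q)$. For the positive minimizer I would take $\alpha_n\nearrow\alpha_q$ with $\alpha_n>0$ and positive minimizers $y_n$ of $\lambda(\alpha_n,q)$, normalized by $\int y_n^2=1$. Since $\lambda(\alpha_n,q)<\pi^2$ and the nonlocal term is non-negative, $\int(y_n')^2$ is bounded, so Rellich compactness yields $y_n\rightharpoonup y_\infty$ in $H_0^1$ and $y_n\to y_\infty$ in $L^2$ along a subsequence, with $y_\infty\ge 0$ and $\int y_\infty^2=1$ (so $y_\infty\not\equiv 0$). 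Passing to the limit via strong $L^q$-convergence in the nonlocal term, weak lower semicontinuity for the Dirichlet part, and continuity of $\alpha\mapsto\lambda(\alpha,q)$ (which is concave---being the infimum over $u$ of the affine functions $\alpha\mapsto\mathcal Q[u,\alpha]$---and hence continuous) yields $\mathcal Q[y_\infty,\alpha_q]\le\lambda(\alpha_q,q)$, so $y_\infty$ is a non-negative minimizer; strict positivity on $]-1,1[$ then follows from the Euler--Lagrange equation by the strong maximum principle. The ``hence'' dichotomy follows by re-running the rearrangement step from the proof of Theorem~\ref{mainthm1} at the critical value: a sign-changing minimizer $y$ with $\int|y|^{q-1}y\ne 0$ would produce a strictly smaller Rayleigh quotient, contradicting $\mathcal Q[y,\alpha_q]=\pi^2$; if instead $\int|y|^{q-1}y=0$, then $y$ extremizes the plain Poincar\'e quotient at level $\pi^2$ and by the spectral decomposition of $-d^2/dx^2$ on $H_0^1(-1,1)$ must equal $c\sin\pi x$. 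The main obstacle I foresee is precisely this limiting step together with the dichotomy: the former needs nontriviality of $y_\infty$ (via the $L^2$-normalization plus Rellich) and continuity of $\lambda(\cdot,q)$ (via concavity), while the latter requires careful bookkeeping of the strict inequalities appearing in the rearrangement step of Theorem~\ref{mainthm1}.
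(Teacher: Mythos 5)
Your proposal follows essentially the same route as the paper on all three parts: the characteristic relation $\alpha=\lambda\sqrt\lambda/(2\sqrt\lambda-2\tan\sqrt\lambda)$ and the two-parameter trigonometric family for $q=1$ (the paper's $y_A=\tfrac A2(1+\cos\pi x)-\sqrt{1-A}\,\sin\pi x$ is exactly your $(A,B)$ family after normalization), the linear eigenvalue computation giving $\alpha_2=\tfrac34\pi^2$, and an approximating-sequence compactness argument at $\alpha=\alpha_q$ --- which is precisely what the paper compresses into the phrase ``by means of approximating sequences''. Your computations ($\int(y')^2=\pi^2(A^2+B^2)$, $\int y^2=3A^2+B^2$, $\mathcal Q[y,\pi^2/2]=\pi^2$) check out, and the ``hence'' dichotomy in part (2) is indeed available directly from Proposition \ref{cambiosegno} (b2)--(c2) rather than by re-running the rearrangement step.

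One step deserves a caveat. Your limit $y_\infty$ is only nonnegative, and you upgrade to strict positivity ``by the strong maximum principle''. For $q=2$ the Euler--Lagrange equation is linear and this is fine, but for $1<q<2$ the equation reads $-y''=\pi^2 y-\alpha\gamma\,|y|^{q-1}$ with a \emph{sublinear} absorption term: near an interior zero of $y$ the term $\alpha\gamma\,y^{q-1}$ dominates $\pi^2 y$, the coefficient obtained by writing $\alpha\gamma\,y^{q-1}=(\alpha\gamma\,y^{q-2})\,y$ is unbounded, and the classical strong maximum principle (or Vazquez's integral criterion, since the primitive behaves like $s^{q}$ with $q<2$) does not apply; dead cores cannot be excluded by the maximum principle alone, and one must instead argue via the first integral of the ODE or a minimality/rescaling argument. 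This is a shared soft spot rather than a defect relative to the paper, which in its proof only asserts a \emph{nonnegative} minimizer even though the theorem states ``positive''.
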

 {
Let us observe that, for any $\alpha\in\R$, it holds that
\begin{equation}
\label{nonloc-twist}
\lambda(\alpha,q) \le \Lambda_q=\pi^2,
\end{equation}
where
\begin{equation}
\label{twist}
\Lambda_q:=\min \left\{ \dfrac{\ds\int_{-1}^{1}|u'|^{2}dx}{\ds\int_{-1}^{1}|u|^{2}dx},\; u\in H_0^1(-1,1),\,\int_{-1}^{1}|u|^{q-1}u\,dx=0,\,u\not\equiv 0 \right\}.
\end{equation}
It is known that, when $q\in[1,2]$, then $\Lambda_q=\Lambda_1=\pi^{2}$, and the minimizer of \eqref{twist} is, up to a multiplicative constant, $y(x)=\sin \pi x$, $x\in[-1,1]$ (see for example \cite{CD}).} 

Problems with prescribed averages of $u$ and boundary value conditions have been studied in several papers. We refer the reader, for example, to \cite{BK,BKN,CD,E,EK,GN,N}. In recent literature, also the multidimensional case has been adressed (see, for example \cite{BDNT}, \cite{FH}, \cite{CHP,CHPerratum}, \cite{Narxiv}).

\begin{rem}
If the interval of integration is $]a,b[$ instead of $]-1,1[$, then
\[
\lambda(\alpha,q;]a,b[)= \left(\frac{2}{b-a}\right)^{2} \cdot 
\lambda\left(\left(\frac{b-a}{2}\right)^{1+\frac 2q}\alpha,\,q\right).
\]
\end{rem}

The outline of the paper follows. In Section 2 we show some properties of $\lambda(\alpha,q)$, while in Section 3 we study the behavior of changing-sign minimizers. Finally, in Section 4 we give the proof of the main results.

\section{Some properties of the first eigenvalue}
Let us observe that if $y$ is a minimizer in \eqref{operat}, then is not restrictive to suppose that $\int_{-1}^{1}|y|^{q-1}y\,dx\ge 0$. From now on, we assume that this condition is satisfied by the minimizers.
\begin{prop}
\label{propr}
Let $1\le q \le 2$ and $\alpha\in \R$. The following properties of $\lambda(\alpha,q)$ hold.
\begin{enumerate}
\item[(a)] Problem \eqref{operat} admits a minimizer.
\item[(b)] {Any minimizer $y$ of \eqref{operat} satisfies the following boundary value problem
\begin{equation}
\label{el}
\left\{
\begin{array}{ll}
-y'' + \alpha \gamma |y|^{q-1}=\lambda(\alpha,q)\, y& \text{in}\ ]-1,1[\\[.4cm]
y(-1)=y(1)=0,
\end{array}
\right.
\end{equation}
where 
\[
\gamma=
\begin{cases}
0 &\text{if both } q=2 \text{ and }\displaystyle \int_{-1}^1 y|y|\  dx=0, \\
\displaystyle\left(\int_{-1}^1 y|y|^{q-1}\  dx\right)^{\frac{2}{q}-1} &\text{otherwise}.
\end{cases}
\]
Moreover, $y\in C^{2}(-1,1)$. 
}
\item[(c)] 
 For any $q\in[1,2]$, the function $\lambda(\cdot,q)$ is Lipschitz continuous and non-decreasing with respect to $\alpha\in\R$. 
\item[(d)] If $\alpha\le 0$, the minimizers of \eqref{operat} do not change sign in $]-1,1[$. In addition, 
\[
\ds\lim_{\alpha\to -\infty}\lambda(\alpha,q)=-\infty.
\]
\item[(e)] As $\alpha\to+\infty$, we have that
{\[
\lambda(\alpha,q)\to \Lambda_q=\pi^{2}.
\]}
\end{enumerate}
\end{prop}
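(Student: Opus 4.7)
My plan is to handle the five claims in sequence: the direct method for (a), a first variation for (b), comparison tests in $\alpha$ for (c), a symmetrization for (d), and a weak-compactness argument for (e). Write $I(u):=\int_{-1}^{1} u|u|^{q-1}\,dx$ throughout. For (a), the key preliminary bound is H\"older's inequality (using $q\le 2$),
\[
|I(u)|^{2/q}\le \Big(\int_{-1}^{1}|u|^{q}\,dx\Big)^{2/q}\le 2^{2/q-1}\int_{-1}^{1}|u|^{2}\,dx,
\]
so that the nonlocal term is dominated by the $L^{2}$-norm, uniformly in $u$. Consequently, a minimizing sequence normalized in $L^{2}$ is bounded in $H_{0}^{1}(-1,1)$ regardless of the sign of $\alpha$, and the direct method (weak lower semicontinuity of the Dirichlet energy together with the compact embedding $H_{0}^{1}\hookrightarrow L^{q}$) produces a minimizer.

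For (b), I would differentiate $\mathcal{Q}[y+t\phi,\alpha]$ at $t=0$ against a test $\phi\in C_{c}^{\infty}(-1,1)$. When $I(y)>0$, the chain rule gives
\[
\tfrac{d}{dt}\big|_{t=0}|I(y+t\phi)|^{2/q}=2\,I(y)^{2/q-1}\int_{-1}^{1}|y|^{q-1}\phi\,dx,
\]
and the resulting stationarity condition is exactly \eqref{el} with $\gamma=I(y)^{2/q-1}$; the same formula covers $1\le q<2$ with $I(y)=0$, where $|\cdot|^{2/q}$ is still $C^{1}$ at the origin and yields $\gamma=0$. The delicate case is $q=2$ with $I(y)=0$: here $y$ is admissible in \eqref{twist}, so $\int|y'|^{2}/\int y^{2}\ge\Lambda_{2}=\pi^{2}$, and combined with the trivial upper bound $\lambda(\alpha,2)\le\pi^{2}$ (obtained by testing $\sin\pi x$) this forces $y$ to be a minimizer of \eqref{twist}, hence $y=c\sin\pi x$, which satisfies $-y''=\pi^{2}y$, i.e.\ \eqref{el} with $\gamma=0$. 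Regularity then follows by bootstrapping: $y\in H_{0}^{1}\subset C^{0}$, so $|y|^{q-1}\in C^{0}$, whence $y''\in C^{0}$ and $y\in C^{2}$.

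For (c), monotonicity is tautological because $\alpha\mapsto\mathcal{Q}[u,\alpha]$ is non-decreasing for each fixed $u$. For the Lipschitz bound, plugging a minimizer of $\lambda(\alpha_{1},q)$ (resp.\ $\lambda(\alpha_{2},q)$) into $\mathcal{Q}[\,\cdot\,,\alpha_{2}]$ (resp.\ $\mathcal{Q}[\,\cdot\,,\alpha_{1}]$) and invoking the H\"older bound above yields $|\lambda(\alpha_{2},q)-\lambda(\alpha_{1},q)|\le 2^{2/q-1}|\alpha_{2}-\alpha_{1}|$. For (d) with $\alpha<0$: if $y$ changed sign, the triangle inequality $|I(y)|<\int|y|^{q}=I(|y|)$ would be strict, and since $\int|(|y|)'|^{2}=\int|y'|^{2}$ and $\int|y|^{2}=\int y^{2}$, one would get $\mathcal{Q}[|y|,\alpha]<\mathcal{Q}[y,\alpha]$, contradicting minimality; for $\alpha=0$ the problem reduces to the classical Poincar\'e inequality, whose minimizer $\cos(\pi x/2)$ has constant sign. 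Finally, any fixed positive $u\in H_{0}^{1}$ has $I(u)>0$, so $\mathcal{Q}[u,\alpha]\to-\infty$ as $\alpha\to-\infty$.

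For (e), the upper bound $\lambda(\alpha,q)\le\pi^{2}$ again comes from testing $y(x)=\sin\pi x$, whose Rayleigh quotient is $\pi^{2}$ and for which $I(y)=0$ by oddness; by monotonicity the limit $L:=\lim_{\alpha\to+\infty}\lambda(\alpha,q)$ exists and satisfies $L\le\pi^{2}$. Take minimizers $y_{\alpha}$ normalized by $\|y_{\alpha}\|_{2}=1$; then the identity $\int|y_{\alpha}'|^{2}+\alpha|I(y_{\alpha})|^{2/q}=\lambda(\alpha,q)\le L$ forces $|I(y_{\alpha})|^{2/q}\le L/\alpha\to 0$ and keeps $y_{\alpha}$ bounded in $H_{0}^{1}$. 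A weak subsequential limit $y$ has $\|y\|_{2}=1$ (strong $L^{2}$-convergence) and $I(y)=0$ (strong $L^{q}$-convergence), so it is admissible in \eqref{twist}, giving $\int|y'|^{2}\ge\Lambda_{q}=\pi^{2}$; weak lower semicontinuity of the Dirichlet energy then yields $L\ge\int|y'|^{2}\ge\pi^{2}$, hence $L=\pi^{2}$. The main obstacle in this plan is the degenerate case $q=2$, $I(y)=0$ in part (b): the functional is non-smooth at $y$, and the cleanest way I see to obtain \eqref{el} there is the bootstrap through $\Lambda_{2}=\pi^{2}$ sketched above, which sidesteps a subgradient calculation.
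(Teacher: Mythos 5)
Your proposal is correct and follows essentially the same route as the paper: the direct method with the H\"older bound $(\int|u|^q)^{2/q}\le 2^{2/q-1}\int|u|^2$ for (a) and (c), the comparison $\mathcal Q[u,\alpha]\ge\mathcal Q[|u|,\alpha]$ for (d), the normalized-minimizer compactness argument against $\Lambda_q$ for (e), and, for the degenerate case of (b), the observation that a minimizer with vanishing $q$-average is a minimizer of the twisted problem \eqref{twist} and hence satisfies the linear equation with $\gamma=0$. The only difference is cosmetic: you spell out the first variation and the $C^1$-at-the-origin discussion for $1\le q<2$, which the paper leaves implicit.
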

\begin{proof}
The existence of a minimizer follows immediately by the standard methods of Calculus of Variations. Furthermore, any minimizer satisfies \eqref{el}. In particular, let us explicitly observe that if $1\le q\le 2$ and there exists a minimizer $y$ of $\lambda(\alpha,q)$ such that $\int_{-1}^{1}|y|^{q-1}y\,dx=0$, then 
it holds that $\gamma=0$ in \eqref{el}. Indeed, in such a case $y$ is a minimizer also of the problem \eqref{twist}, whose Euler-Lagrange equation is (see \cite{CD})
\begin{equation*}
\left\{
\begin{array}{ll}
-y'' =\lambda(\alpha,q)\, y\quad &\text{in}\ ]-1,1[,\\[.4cm]
y(-1)=y(1)=0.
\end{array}
\right.
\end{equation*}
From \eqref{el} immediately follows that any minimizer $y$ is $C^{2}(-1,1)$. Hence {\it (a)-(b)} have been proved. 

In order to get property {\it (c)}, we stress that for all $\eps>0$, by H\"older inequality, it holds
\begin{equation*}
\mathcal{Q} [u,\alpha+\eps]\le\mathcal{Q}[u,\alpha]+\eps\frac{\left(\ds\int_{-1}^{1} |u|^q\  dx\right)^{2/q}}{\ds\int_{-1}^{1} u^2\  dx}\leq\mathcal{Q}[u,\alpha]+ 2^\frac{2-q}{q}\eps,\quad\forall\, \eps>0.
\end{equation*}
Therefore the following chain of inequalities
\begin{equation*}
\mathcal{Q}[u,\alpha]\leq\mathcal{Q}[u,\alpha+\eps]\le\mathcal{Q}[u,\alpha]+ 2^\frac{2-q}{q}\eps,\quad\forall \ \varepsilon>0,
\end{equation*}
implies, taking the minimum as $u\in H_0^1(-1,1)$, that
\begin{equation*}
\lambda (\alpha ,q)\leq\lambda (\alpha+\varepsilon,q)\leq\lambda (\alpha,q)+2^\frac{2-q}{q}\varepsilon,\quad\forall \ \varepsilon>0,
\end{equation*}
that proves {\it (c)}. 

If $\alpha< 0$, then
\[
\mathcal{Q}[u, \alpha] \ge \mathcal Q[|u|,\alpha],
\]
with equality if and only if $u\ge 0$ or $u\le 0$. Hence any minimizer has constant sign in $]-1,1[$. Finally, it is clear from the definition that $\ds\lim_{\alpha\to-\infty}\lambda(\alpha,q)=-\infty$. Indeed, by fixing a positive test function $\varphi$ we get
\[
\lambda(\alpha,q) \le \mathcal Q[\varphi,\alpha].
\]
Being $\varphi>0$ in $]-1,1[$, then  $\mathcal Q[\varphi,\alpha] \to -\infty \quad\text{as }\alpha\to -\infty$,
 and the proof of {\it (d)} is completed.

In order to show {\it (e)}, we recall that {$\lambda(\alpha,q)\le  \Lambda_q=\pi^{2}$. }

Let $\alpha_k\ge 0$, $k_n\in\N$, be a positively divergent sequence. For any $k$, consider a minimizer $y_k\in W_0^{1,2}$ of (\ref{operat}) such that $\|y_k\|_{L^2}=1$. We have that{
\begin{equation*}
\lambda(\alpha_k,q)=\int_{-1}^{1} |y'_k|^2\  dx + \alpha_k\left(\ds\int_{-1}^{1} y_k|y_k|^{q-1} \  dx\right)^\frac{2}{q}\leq \Lambda_q.
\end{equation*}}
Then $y_k$ converges (up to a subsequence) to a function $y\in H_0^1$, strongly in $L^2$ and weakly in $H_0^1$. Moreover $\|y\|_{L^2}=1$ and {
\begin{equation*}
\left( \int_{-1}^{1} y_k|y_k|^{q-1} \  dx\right)^\frac{2}{q}\leq\frac{ \Lambda_q}{\alpha_k}\rightarrow 0 \quad\text{as}\ k\rightarrow + \infty
\end{equation*}}
which gives that $\int_\Omega y|y|^{q-1} \  dx=0$. On the other hand the weak convergence in $H_0^{1}$ implies that
\begin{equation}
\label{lsclap}
\int_{-1}^{1} |y'|^2\  dx \leq \liminf_{k \rightarrow \infty}\int_{-1}^{1} |y'_k|^2\  dx.
\end{equation}
By definitions of {$\Lambda_q$ and $\lambda(\alpha,q)$, and by (\ref{lsclap}) we have
\begin{equation*}
\begin{split}
 \Lambda_q\le
\int_{-1}^{1} |y'|^2\  dx &\leq \liminf_{k \rightarrow \infty}\left[\int_{-1}^{1} |y'_k|^2\  dx
+ \alpha_k\left( \int_{-1}^{1} y_k|y_k|^{q-1} \  dx\right)^\frac{2}{q}\right]\\
&\leq\lim_{k \rightarrow \infty}\lambda(\alpha_k,q)\leq \Lambda_q.
\end{split}
\end{equation*}}
and the result follows.
\end{proof}
\begin{rem}
If $\lambda(\alpha,q)=0$, then
\begin{equation}
\label{lapuq2}
-\alpha=\min_{w\in H_0^1(-1,1)}\frac{\ds\int_{-1}^{1} |w'|^2 dx}{\left(\ds\int_{-1}^{1} |w|^{q} \ {d}x\right)^{2/q}}.
\end{equation}
Indeed, if $\lambda(\alpha,q)=0$ then necessarily $\alpha<0$ and the minimizers of \eqref{operat} have constant sign.
Let $y\ge 0$ be a minimizer of \eqref{operat}, by definition we have
\begin{equation*}
0=\lambda(\alpha,q)=\frac{\ds\int_{-1}^{1} | y'|^2\ dx+\alpha \left(\int_{-1}^{1} y^{q}\ dx\right)^{\frac{2}{q}}}{\ds\int_{-1}^{1} \bar u^2\ dx}
\end{equation*}
and hence
\begin{equation}
\label{alpugu}
-\alpha=\dfrac{\ds\int_{-1}^{1} | y'|^2dx}{\ds\left(\int_{-1}^{1} y^{q} \ dx\right)^{2/q}}.
\end{equation}
If we denote by $v$ a minimizer of problem \eqref{lapuq2}, we have
\begin{equation*}
0\leq \int_{-1}^{1} | v'|^2\  dx+\alpha \left(\int_{-1}^{1} |v|^{q}\ {d}x\right)^{{2}/{q}}
\end{equation*}
and therefore
\begin{equation*}
-\alpha\leq\dfrac{\ds\int_{-1}^{1} | v'|^2dx}{\ds\left(\int_{-1}^{1} |v|^{q} \ {d}x\right)^{2/q}}=\min_{w\in H_0^1(-1,1)}\frac{\ds\int_{-1}^{1} |w'|^2 dx}{\left(\ds\int_{-1}^{1} |w|^{q} \ {d}x\right)^{2/q}}.
\end{equation*}
From (\ref{alpugu}) the result follows.
\end{rem}

\section{Changing-sign minimizers}
{
We first analyze the behavior of the minimizers of \eqref{operat}, by assuming that they have to change sign in $]-1,1[$. In this case, by Proposition \ref{propr} {\it (d)}, we may suppose that $\alpha>0$. Moreover, due the homogeneity of the problem, in all the section we will assume also that
\[
\max_{[-1,1]} y(x)=1,\quad \min_{[-1,1]} y(x)=-\bar m,\quad \bar m\in]0,1].
\]
It is always possible to reduce to this condition, by multiplying the solution by a constant if necessary.
\\
We split the list of the main properties in two propositions.
\begin{prop}
\label{cambiosegno0}
Let $1\le q \le 2$ and suppose that, for some $\alpha>0$, $\lambda(\alpha,q)$ admits a minimizer $y$ that changes sign in $[-1,1]$. Then the following properties hold.
\begin{enumerate}
\item[(a1)] The minimizer $y$ has in $]-1,1[$ exactly one maximum point, $\eta_{M}$, with $y(\eta_{M})=1$, and exactly one minimum point, $\eta_{\bar m}$, with $y(\eta_{\bar m})= - \bar m$.
\item[(b1)] If $y_{+}\ge0$ and $y_{-}\le 0$ are, respectively, the positive and negative part of $y$, then $y_{+}$ and $y_{-}$ are, respectively, symmetric about $x=\eta_{M}$ and $x=\eta_{\bar m}$.
\item[(c1)] There exists a unique zero of $y$ in $]-1,1[$.
\item[(d1)]  In the minimum value $\bar m$ of $y$, it holds that  
\[
\lambda(\alpha,q)=H(\bar m,q)^2,
\] 
where $H(m,q)$, $(m,q)\in[0,1]\times[1,2]$, is the function defined as
\begin{multline*}
H(m,q):= \int_{-m}^1\frac{dy}{ \sqrt{1-z(m,q)(1- |y|^{q-1}y) - y^2}}=\\[.3cm]
=\int_{0}^1\frac{dy}{ \sqrt{1-z(m,q)(1-y^{q}) - y^2}} + \int_{0}^1\frac{mdy}{ \sqrt{1-z(m,q)(1+m^q y^{q}) -m^2 y^2}}
\end{multline*}
and $z(m,q)=\frac{1-m^2}{1+m^q}$.
\end{enumerate}
\end{prop}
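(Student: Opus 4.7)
My plan is to exploit the Euler--Lagrange ODE of Proposition~\ref{propr}(b) together with its first integral. Multiplying $-y''+\alpha\gamma|y|^{q-1}=\lambda y$ (with $\lambda=\lambda(\alpha,q)$) by $y'$ and integrating yields the conserved quantity $(y')^{2}+\lambda y^{2}-\tfrac{2\alpha\gamma}{q}|y|^{q-1}y=K$. Evaluating at the interior maximum ($y=1$, $y'=0$) and at the interior minimum ($y=-\bar m$, $y'=0$) gives a linear $2\times 2$ system whose unique solution is
\[
\tfrac{2\alpha\gamma}{q}=\lambda\,z(\bar m,q),\qquad K=\lambda\bigl(1-z(\bar m,q)\bigr),
\]
with $z(\bar m,q)=(1-\bar m^{2})/(1+\bar m^{q})$. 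Substituting back, the first integral collapses to $(y')^{2}=\lambda\,G(y)$, where $G(y):=1-z(1-|y|^{q-1}y)-y^{2}$.

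A quick inspection shows that $G$ vanishes on $[-\bar m,1]$ only at the endpoints (which are simple zeros, and $G(0)=1-z>0$), so the ODE $(y')^{2}=\lambda G(y)$ defines a closed orbit in the $(y,y')$-phase plane. In particular, on $(-1,1)$ the critical points of $y$ are located exactly where $y\in\{1,-\bar m\}$, and between two consecutive such points $y$ is strictly monotone. The resulting quadrature
\[
x-\eta=\pm\int_{y(\eta)}^{y(x)}\frac{dt}{\sqrt{\lambda\,G(t)}}
\]
immediately implies (b1): the positive (resp.\ negative) part of $y$ is symmetric about each of its local maxima (resp.\ minima).

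For (a1) and (c1) I would count bumps. Set $h_{+}=\int_{0}^{1}dt/\sqrt{G(t)}$ and $h_{-}=\int_{-\bar m}^{0}dt/\sqrt{G(t)}$, so that each positive bump (zero-to-$1$-to-zero) has length $2h_{+}/\sqrt{\lambda}$ and each negative bump has length $2h_{-}/\sqrt{\lambda}$. If $y$ has $k$ positive and $\ell$ negative bumps, the boundary condition $y(\pm 1)=0$ forces $kh_{+}+\ell h_{-}=\sqrt{\lambda}$; any choice $(k,\ell)\neq(1,1)$ would then produce a strictly larger value of $\sqrt\lambda$ (only this integer combination enters the length constraint at fixed $\bar m$, and it is minimized at $k=\ell=1$), contradicting the minimality of $\lambda(\alpha,q)$. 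Hence $y$ has exactly one maximum, one minimum and a single internal zero, and (d1) follows at once: $\sqrt\lambda=h_{+}+h_{-}=H(\bar m,q)$. The two displayed expressions for $H$ in the statement are equivalent after the substitution $y\mapsto -y$ on $[-\bar m,0]$.

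The main technical obstacle is precisely this final counting step: the nonlocal coupling through $\gamma=(\int y|y|^{q-1}dx)^{2/q-1}$ and the pinning relation $\alpha\gamma=q\lambda z/2$ must be tracked carefully in order to exclude the $(k,\ell)\neq(1,1)$ configurations via the variational characterization of $\lambda(\alpha,q)$, rather than just by phase-plane counting.
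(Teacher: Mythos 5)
Your setup — the first integral $(y')^{2}=\lambda\,G(y)$ with $\tfrac{2\alpha\gamma}{q}=\lambda z(\bar m,q)$, the observation that $G$ vanishes on $[-\bar m,1]$ only at the endpoints (which does use $q\le 2$, worth saying), and the quadrature giving symmetry of each bump and equality of all bump lengths of a given sign — is exactly the paper's route, and that part is fine. The gap is the counting step, which you yourself flag as the ``main technical obstacle'' but then dispose of with a non sequitur. The relation $k\,h_{+}(\bar m)+\ell\,h_{-}(\bar m)=\sqrt{\lambda}$ does \emph{not} contradict minimality for $(k,\ell)\neq(1,1)$: the quantities $h_{\pm}$ depend on $\bar m$, which is itself determined by the solution, so a configuration with, say, $k=\ell=2$ simply corresponds to a different value of $\bar m$ solving $2h_{+}+2h_{-}=\sqrt{\lambda}$. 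The eigenvalue $\lambda(\alpha,q)$ is the minimum of a Rayleigh quotient over functions, not a minimum over integer pairs $(k,\ell)$ at fixed $\bar m$; to exclude a configuration you must exhibit an explicit competitor with strictly smaller quotient.

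The paper does this in two genuinely distinct stages that your argument omits. First, it rules out an even number of zeros (i.e.\ $k\neq\ell$) by comparing the $q$-masses $A_{+}\ge A_{-}$ of positive and negative bumps and flipping the sign of $y$ on the last interval: this preserves $\int|y'|^{2}$ and $\int y^{2}$ but strictly decreases $\bigl|\int|y|^{q-1}y\bigr|$ when $\bar m<1$, lowering the quotient; the borderline case $\bar m=1$ needs a separate argument via $\gamma=0$. Second, with $k=\ell$ established, it excludes $k=\ell>1$ by the rescaling competitor $w(x)=y\bigl(\tfrac{2(x+1)}{n-1}-1\bigr)$, which scales the Dirichlet term by $\bigl(\tfrac{2}{n-1}\bigr)^{2}<1$ while preserving the $L^{2}$ norm and the $q$-average (this is where the equality of bump lengths and of the $A_{\pm}$ across bumps is actually used). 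Neither competitor appears in your proposal, and without them (or some substitute, e.g.\ invoking $H\ge\pi$ together with $\lambda\le\pi^{2}$ after the parity step — which still would not handle $k\neq\ell$) the claims (a1), (c1) are not established. Once the counting is done, your derivation of (d1) is correct.
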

\begin{prop}\label{cambiosegno} Let us suppose that, for some $\alpha>0$, $\lambda(\alpha,q)$ admits a minimizer $y$ that changes sign in $[-1,1]$. Then the following properties holds.
\item[(a2)] If $1\le q \le 2$, then 
\[
\lambda(\alpha,q)=\lambda^{T}=\pi^{2}.
\] 
\item[(b2)] If $1<q\le 2$, then
\begin{equation}
\label{q-average}
\int_{-1}^{1}|y|^{q-1}y\,dx=0.
\end{equation}
\item[(c2)] If $1\le q \le 2$ and \eqref{q-average} holds, then {$y(x)=C\sin \pi x$, with $C\in \R\setminus\{0\}$}. Hence the only point in $]-1,1[$ where $y$ vanishes is $\overline x=0$. 
\end{prop}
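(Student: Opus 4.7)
The plan is to deduce (a2), (b2), (c2) in order, using the representation $\sqrt{\lambda(\alpha,q)}=H(\bar m,q)$ of Proposition \ref{cambiosegno0}(d1) and the general upper bound $\lambda(\alpha,q)\le\pi^{2}$ from \eqref{nonloc-twist}.

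The core of the proof of (a2) is the one-variable lower bound
\[
H(m,q)\;\ge\;\pi \qquad\text{for every } (m,q)\in(0,1]\times[1,2],
\]
with equality if and only if $q=1$ or $m=1$. Once this is in hand, $\pi^{2}\le H(\bar m,q)^{2}=\lambda(\alpha,q)\le\pi^{2}$, giving $\lambda(\alpha,q)=\pi^{2}$. I would check the benchmark $H(1,q)=\pi$ by plugging $m=1$ (so $z(1,q)=0$), which reduces both integrals in the definition of $H$ to $\int_{0}^{1}dy/\sqrt{1-y^{2}}=\pi/2$. The identity $H(m,1)\equiv\pi$ can be verified in closed form via $z=1-m$ together with the factorizations $(1-y)(y+m)$ and $(1-y)(1+my)$ of the respective radicands. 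For $q\in(1,2]$ and $m\in(0,1)$, I would then establish strict monotonicity of $m\mapsto H(m,q)$ on $(0,1]$ by differentiating under the integral sign, exploiting $\partial_{m}z(m,q)<0$ and the change of variable $y=m\phi$ in the second summand; this monotonicity step is the principal technical obstacle.

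For (b2), the strict inequality $H(m,q)>\pi$ for $m<1$ and $q>1$, together with $H(\bar m,q)=\pi$, forces $\bar m=1$. Then $z=0$ and each of the two integrals in the formula for $H$ equals $\pi/2$, which via Proposition \ref{cambiosegno0}(d1) translates into $L_{+}=L_{-}=1$. Consequently the unique interior zero of $y$ (Proposition \ref{cambiosegno0}(c1)) sits at $x_{0}=0$, and combining this with the symmetries of $y_{+}$ about $\eta_{M}$ and of $y_{-}$ about $\eta_{\bar m}$ (Proposition \ref{cambiosegno0}(b1)) we conclude $y(-x)=-y(x)$; parity then yields $\int_{-1}^{1}|y|^{q-1}y\,dx=0$.

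Finally, for (c2), the hypothesis $\int_{-1}^{1}|y|^{q-1}y\,dx=0$ makes $y$ admissible in the minimization \eqref{twist} defining $\Lambda_{q}$, and the nonlocal term in $\mathcal Q[y,\alpha]$ vanishes, so $\int(y')^{2}/\int y^{2}=\lambda(\alpha,q)=\pi^{2}=\Lambda_{q}$. Hence $y$ realizes the minimum in \eqref{twist}, and by the uniqueness (up to a multiplicative constant) recalled after \eqref{twist} and proved in \cite{CD}, we conclude $y(x)=C\sin(\pi x)$ with $C\ne 0$, whose only zero in $(-1,1)$ is $x=0$.
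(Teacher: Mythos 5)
Your logical skeleton matches the paper's: combine the representation $\lambda(\alpha,q)=H(\bar m,q)^2$ from Proposition \ref{cambiosegno0}~\emph{(d1)} with the upper bound \eqref{nonloc-twist} to force $\lambda(\alpha,q)=\pi^2$, use the strict inequality $H(m,q)>\pi$ for $m<1$, $q>1$ to force $\bar m=1$ and hence the vanishing of the $q$-average, and then identify $y$ as $C\sin\pi x$. Your treatments of (b2) (oddness via the symmetries and equal half-lengths, rather than the paper's reading of $\gamma=\frac{q\lambda}{2\alpha}z(1,q)=0$ from \eqref{costnl}) and of (c2) (uniqueness of the minimizer of \eqref{twist} from \cite{CD}, rather than solving the Euler--Lagrange equation \eqref{el} with $\gamma=0$) are legitimate variants.

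However, there is a genuine gap at the heart of the argument: the lower bound $H(m,q)\ge\pi$ with equality only when $q=1$ or $m=1$ is the entire mathematical content of the proposition, and you do not prove it --- you explicitly defer it as ``the principal technical obstacle.'' The paper devotes Lemma \ref{lemma-monotonia} (several pages of estimates on $\de_q h$, including the auxiliary functions $h_1$, $h_2$, $g$, $\ell$, $\mu$) to exactly this point, fixing $m$ and showing the integrand $h(m,\cdot,y)$ is strictly increasing in $q$, anchored at the exactly computable value $H(m,1)=\pi$. Your proposed alternative --- strict monotonicity of $m\mapsto H(m,q)$ on $]0,1]$ --- is not obviously true and certainly does not follow just from $\de_m z<0$: the parameter $m$ also enters the second integrand through the explicit factor $m$ in the numerator and through $m^q y^q$ and $m^2y^2$ in the radicand, and these contributions pull in opposite directions. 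Moreover, at $q=1$ one has $H(m,1)\equiv\pi$, so $\de_m H(m,1)\equiv 0$; any proof of strict negativity of $\de_m H$ for $q>1$ must therefore use $q>1$ in an essential, quantitative way, which is precisely the kind of computation you have omitted. Until that lemma (in either the $q$-monotone or the $m$-monotone form) is actually established, items (a2) and (b2) are unproved.
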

}
\begin{proof}[Proof of Proposition \ref{cambiosegno0}]
First of all, if $y$ is a minimizer of \eqref{operat} which changes sign, let us consider $\eta_{M},\eta_{\bar m}$ in $]-1,1[$ such that $y(\eta_{M})=1=\max_{[-1,1]}y$, and  $y(\eta_{\bar m})=-\bar m=\min_{[-1,1]}y$, with $\bar m\in]0,1]$. For the sake of simplicity, we will write $\lambda=\lambda(\alpha,q)$.
Multiplying the equation in \eqref{el} by $y'$ and integrating we get
\begin{equation}
\label{inel1d}
\frac{y'^2}{2}+\lambda \frac{y^2}{2} = \frac{\alpha\gamma}{q} |y|^{q-1}y + c\qquad\ \text{in } ] -1,1[,
\end{equation}
for a suitable constant $c$. Being $y'(\eta_{M})=0$ and $y(\eta_{M})=1$, we have
\begin{equation}
\label{cosmax}
c=\frac{\lambda}{2}-\frac{\alpha}{q} \gamma.
\end{equation}
Moreover, $y'(\eta_{\bar m})=0$ and $y(\eta_{\bar m})$ give also that
\begin{equation}
\label{cosmin}
c=\lambda\frac{\bar m^2}{2}+\frac{\alpha}{q} \bar m^q\gamma.
\end{equation}
Joining \eqref{cosmax} and \eqref{cosmin}, we obtain
\begin{equation}
\label{costnl}
\begin{sistema}
 \gamma =\frac{\ds q\lambda}{\ds 2 \alpha}z(\bar m,q)\\[.3cm]
c=\frac{\ds\lambda}{\ds 2}t(\bar m,q)
\end{sistema}
\end{equation}
where
\begin{equation*}
z(m,q)=\frac{1-m^2}{1+m^q} \quad\text{and}\quad t(m,q)=\frac{m^2+m^q}{1+m^q}=1-z(m,q).
\end{equation*}
Then \eqref{inel1d} can be written as 
\begin{equation}
\label{integratedel0}
\frac{y'^2}{2}+\lambda \frac{y^2}{2} = \frac{\lambda}{2} z(\bar m,q) |y|^{q-1}y + \frac{\lambda}{2} t(\bar m,q)\qquad\ \text{in }  ]-1,1[.
\end{equation}
Therefore we have
\begin{equation}
\label{integratedel}
(y')^2=\lambda [1-z(\bar m,q)(1- |y|^{q-1}y) - y^2] \qquad\ \text{in }  ]-1,1[.
\end{equation}

First of all, it is easy to see that the number of zeros of $y$ has to be finite.  Let 
\[
-1=\zeta_{1}<\ldots<\zeta_{j}<\zeta_{j+1}<\ldots<\zeta_{n}=1
\] 
be the zeros of $y$.

As observed in \cite{CD}, it is easy to show that 
\begin{equation}
\label{dac}
y'(x)=0 \iff y(x)=-\bar m\text{ or }y(x)=1.
\end{equation}
This implies that $y$ has no other local minima or maxima in $]-1,1[$, and in any interval $]\zeta_{j},\zeta_{j+1}[$ where $y>0$ there is a unique maximum point, and in any interval $]\zeta_{j},\zeta_{j+1}[$ where $y<0$ there is a unique minimum point. 

To prove \eqref{dac}, let
\[
g(Y)=1-z(\bar m,q)(1-|Y|^{q-1}Y)-Y^{2},\quad Y\in [-\bar m,1].
\]
So we have
\begin{equation}
\label{CDproof}
(y')^{2}=\lambda\, g(y).
\end{equation}
Observe that $g(-\bar m)=g(1)=0$. Being $q\le 2$, it holds that $g'(\bar Y)=0$ implies $g(\bar Y)>0$. Hence, $g$ does not vanish in $]-\bar m,1[$. By \eqref{CDproof}, it holds that $y'(x)\ne 0$ if $y(x)\ne 1 $ and $y(x)\ne -\bar m$. 

Now, adapting an argument contained in \cite[Lemma 2.6]{DGS}, the following three claims below allow to complete the proof of {\it (a1)}, {\it (b1)} and {\it (c1)}.

{
\begin{description}
\item[Claim 1:] in any interval $]\zeta_{j},\zeta_{j+1}[$ given by two subsequent zeros of $y$ and in which $y=y^{+}>0$, has the same length; in any of such intervals, $y^{+}$ is symmetric about $x=\frac{\zeta_{j}+\zeta_{j+1}}{2}$;
\item[Claim 2:] in any interval $]\zeta_{j},\zeta_{j+1}[$ given by two subsequent zeros of $y$ and in which $y=y^{-}<0$ has the same length; in any of such intervals, $y^{-}$ is symmetric about $x=\frac{\zeta_{j}+\zeta_{j+1}}{2}$;
\item[Claim 3:] there is a unique zero of $y$ in $]-1,1[$.
\end{description}
Then, $y$ admits a unique maximum point and a unique minimum point in $]-1,1[$, and the positive and negative parts are symmetric with respect to $x=\eta_{M}$ and $x=\eta_{\bar m}$, respectively. 

In order to get claims 1 and 2,  To fix the ideas, let us assume that $y>0$ in $]\zeta_{2k-1},\zeta_{2k}[$, and $y<0$ in $]\zeta_{2k},\zeta_{2k+1}[$. If this is not the case, the procedure is analogous.

Let us consider $y=y_{+}>0$ in $]\zeta_{2k-1},\zeta_{2k}[$, and denote by $\eta_{2k-1}$ the unique maximum point in such interval. Then by \eqref{integratedel}, integrating between $\zeta_{2k-1}$ and $\eta_{2k-1}$ we have
\[
\int_{0}^{1}\frac{dy}{\sqrt{1-z(\bar m,q)(1-y^q) - y^2}}
=(\eta_{2k-1}-\zeta_{2k-1})\sqrt{\lambda}.
\]
Similarly, integrating between $\eta_{2k-1}$ and $\zeta_{2k}$, it holds 
\[
\int_{0}^{1}\frac{dy}{\sqrt{1-z(\bar m,q)(1-y^q) - y^2}}
=(\zeta_{2k}-\eta_{2k-1})\sqrt{\lambda}.
\]
Hence 
\[
\zeta_{2k}-\zeta_{2k-1}=
\frac{1}{\sqrt{\lambda}}\int_{0}^{1}\frac{dy}{\sqrt{1-z(\bar m,q)(1-y^q) - y^2}},\quad\text{and}\quad\eta_{2k-1}=\frac{\zeta_{2k-1}+\zeta_{2k}}{2}.
\]
Similarly, consider that in $]\zeta_{2k},\zeta_{2k+1}[$ it holds $y=y_{-}<0$, and $y(\eta_{2k})=-\bar m$. By \eqref{integratedel}, integrating between $\zeta_{2k}$ and $\eta_{2k}$ we have
\[
\int_{0}^{\bar m}\frac{dy}{\sqrt{1-z(\bar m,q)(1+y^q) - y^2}}
=(\eta_{2k}-\zeta_{2k})\sqrt{\lambda},
\]
and then between $\eta_{2k}$ and $\zeta_{2k+1}$, it holds 
\[
\int_{0}^{\bar m}\frac{dy}{\sqrt{1-z(\bar m,q)(1+y^q) - y^2}}
=(\zeta_{2k+1}-\eta_{2k})\sqrt{\lambda}.
\]
Hence 
\[
\zeta_{2k+1}-\zeta_{2k}=
\frac{1}{\sqrt{\lambda}}\int_{0}^{\bar m}\frac{dy}{\sqrt{1-z(\bar m,q)(1+y^q) - y^2}},\quad\text{and}\quad\eta_{2k}=\frac{\zeta_{2k}+\zeta_{2k+1}}{2}.
\]
Resuming, we have that any interval given by two subsequent zeros of $y$ and in which $y=y_{+}>0$, has the same length. Similarly, 
any interval given by two subsequent zeros of $y$ and in which $y=y_{-}<0$, has the same length. 
\\
Now, again from \eqref{integratedel}, if $x\in]\zeta_{2k-1},\eta_{2k-1}[$ it holds
\begin{equation}
\label{sim1}
\int_{0}^{y(x)}\frac{dy}{\sqrt{1-z(\bar m,q)(1-y^q) - y^2}}
=(x-\zeta_{2k-1})\sqrt{\lambda},
\end{equation}
and, if $t\in]\eta_{2k-1},\zeta_{2k}[$, then
\[
-\int_{0}^{y(t)}\frac{dy}{\sqrt{1-z(\bar m,q)(1-y^q) - y^2}}
=(t-\zeta_{2k})\sqrt{\lambda}.
\]
On the other hand, by choosing $t=\zeta_{2k-1}+\zeta_{2k}-x\in]\eta_{2k-1},\zeta_{2k}[$ it holds
\[
-(x-\zeta_{2k-1})\sqrt{\lambda}=(t-\zeta_{2k})\sqrt{\lambda}=-\int_{0}^{y(t)}\frac{dy}{\sqrt{1-z(\bar m,q)(1-y^q) - y^2}}.
\]
From \eqref{sim1} we deduce that $y(x)=y(t)$, hence $y$ is symmetric about $x=\eta_{2k-1}$ in the interval $]\zeta_{2k-1},\zeta_{2k}[$.
\\
In the same way, $y$ is symmetric about $x=\eta_{2k}$ in the interval $]\zeta_{2k},\zeta_{2k+1}[$.
\\
Now we show that the number of the zeros of $y$ is odd. Let us observe that
\[
A_{+}:=\int_{\zeta_{2k-1}}^{\zeta_{2k}}y^{q}dx\ge
\int_{\zeta_{2k}}^{\zeta_{2k+1}}(-y)^{q}dx =:A_{-}\;.
\]
Indeed, multiplying \eqref{integratedel} by $|y(x)|^{2q}$ and using the symmetry properties of $y$ we have
\begin{multline*}
A_{+}= \frac{2}{\sqrt\lambda} \int_{\zeta_{2k-1}}^{\eta_{2k-1}}\frac{y^{q}}{\sqrt{1-z(\bar m,q)(1-y^q) - y^2}}y'dx
=\\=\frac{2}{\sqrt\lambda}\int_{0}^{1}\frac{y^{q}}{\sqrt{1-z(\bar m,q)(1-y^q) - y^2}}dy
\end{multline*}
and
\begin{multline*}
A_{-}= \frac{2}{\sqrt\lambda} \int_{\eta_{2k}}^{\zeta_{2k+1}}\frac{(-y)^{q}}{\sqrt{1-z(\bar m,q)(1+|y|^q) - y^2}}y'dx
=\\=\frac{2}{\sqrt\lambda}\int_{0}^{1}\frac{\bar m^{q+1}y^{q}}{\sqrt{1-z(\bar m,q)(1+\bar m^{q}y^q) - \bar m^{2}y^2}}dy.
\end{multline*}
If $y$ has an even number $n$ of zeros, two cases may occur.\\
{\bf Case 1: $\bar m=1$}. Then $z(1,q)=0$, and by \eqref{costnl} $\gamma=0$. On the other hand, $A_{+}=A_{-}$, and being $n$ even, then $\gamma=A_{+}^{\frac2q-1}$ and this is absurd.
\\
{\bf Case 2: $\bar m<1$.} Let us consider the function $\tilde y\in H^{1}_{0}(-1,1)$ defined as
\[
\tilde y(x)=
\begin{cases}
\phantom{-}y(x) &\text{if }x \in[\zeta_{0},\zeta_{n-1}]\\
-y(x) &\text{if }x \in [\zeta_{n-1},1].
\end{cases}
\]
We have that
\begin{equation}
\label{contra}
\begin{array}{l}
\ds\int_{-1}^{1}(\tilde y')^{2}dx =
\int_{-1}^{1}(y')^{2}dx,\; \int_{-1}^{1}\tilde y^{2}dx =
\int_{-1}^{1}y^{2}dx, \\[.4cm] \ds \left|\int_{-1}^{1}|\tilde y|^{q-1}\tilde y\,dx\right| < \int_{-1}^{1}|y|^{q-1}y\,dx.
\end{array}
\end{equation}
The first two equalities in \eqref{contra} are obvious. To show last inequality, we recall that $y(x)$ is positive in $]-1,\zeta_{2}[$, hence if it has an even number of zeros, it is positive in $]\zeta_{n-1},1[$.
Hence it is sufficient to observe that $A_{+} > A_{-}$ and
\[
\int_{-1}^{1}|y|^{q-1}y\,dx=\frac{n}{2} A_{+}-\frac{n-2}{2} A_{-},\qquad \int_{-1}^{1}|\tilde y|^{q-1}\tilde y\,dx=\frac{n-2}{2} A_{+}-\frac{n}{2}A_{-}.
\]
Then, \eqref{contra} implies that 
$\mathcal Q[\tilde y,\alpha]<\mathcal Q[ y,\alpha]$ and this contradicts  the minimality of $y$. So, the number $n$ of the zeros of $y$ is odd.
\\
Finally, we conclude that $n=3$ (Claim 3). If not, by considering the function $w(x)=y\left(\frac{2(x+1)}{n-1}-1\right)$, $x\in [-1,1]$, we obtain that 
\begin{equation*}
\mathcal{Q}[w,\alpha]=\dfrac{\left(\dfrac{2}{n-1}\right)^2 \ds\int_{-1}^{1}|y'|^{2}dx+\alpha\left|\int_{-1}^{1}|y|^{q-1}y\, dx\right|^{\frac2q}}{\ds \int_{-1}^{1}|y|^{2}dx}<\mathcal{Q}[y,\alpha],
\end{equation*}
that is absurd.
}
Hence, the solution $y$ has only one zero in $]-1,1[$, and also $(c1)$ is proved.

Now denote by $\eta_{M}$ and $\eta_{\bar m}$, respectively, the unique maximum and minimum point of $y$. It is not restrictive to suppose $\eta_{M}<\eta_{\bar m}$. They are such that $\eta_{M}-\eta_{\bar m}=1$, with $y'<0$ in $]\eta_{M},\eta_{\bar m}[$. Then
\begin{equation*}\sqrt{\lambda(\alpha,q)}=\frac{-y'}{\sqrt{1-z(\bar m,q)(1-|y|^{q-1}y)- y^2}} \qquad\ \text{in} \ ]\eta_{M},\eta_{\bar m}[.
\end{equation*}
Integrating between $\eta_{M}$ and $\eta_{\bar m}$, we have
\begin{equation*}
\lambda(\alpha,q)=\left[\int_{-\bar m}^1\frac{dy}{ \sqrt{1-z(\bar m,q)(1-|y|^{q-1}y) - y^2}}\right]^{2}= H(\bar m,q),
\end{equation*}
and the proof of the Proposition is completed.
\end{proof}
\begin{rem}
We stress that properties $(a1)-(a3)$ can be also proved by using a symmetrization argument, by rearranging the functions $y^{+}$ and $y^{-}$ and using the P\'olya-Sz\H{e}go inequality and the properties of rearrangements (see also, for example, \cite{BFNT} and \cite{D}). For the convenience of the reader, we prefer to give an elementary proof without using the symmetrization technique.
\end{rem}

Our aim now is to study the function $H$ defined in Proposition \ref{cambiosegno0}.
\begin{prop}
\label{propminimo}
For any $m\in[0,1]$ and $q\in [1,2]$ it holds that
\[
H(m,q) \ge H(m,1)=\pi.
\]
Moreover, if $m<1$ and $q>1$, then
\[
H(m,q)>\pi,
\]
while
\[
H(m,1)=\pi,\quad \forall m\in[0,1].
\]
Hence if $H(m,q)=\pi$ and $1<q\le 2$, then necessarily $m=1$.
\end{prop}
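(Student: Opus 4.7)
The plan is to split the argument by treating the two boundary cases of the rectangle $[0,1]\times[1,2]$ and then attacking the strict inequality on the open region. For $q=1$ the quantity $z(m,1) = 1-m$ makes the radicand factor as $1 - (1-m)(1-y) - y^{2} = (1-y)(y+m)$, and the substitution $y = \frac{1-m}{2} + \frac{1+m}{2}\cos\theta$, $\theta\in[0,\pi]$, gives $(1-y)(y+m) = \frac{(1+m)^{2}}{4}\sin^{2}\theta$ and $dy = -\frac{1+m}{2}\sin\theta\,d\theta$, so the integrand collapses to $-d\theta$ and $H(m,1) = \int_{0}^{\pi} d\theta = \pi$ for every $m\in[0,1]$. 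The case $m=1$ is immediate because $z(1,q)=0$ forces the radicand to be $1-y^{2}$, hence $H(1,q) = \int_{-1}^{1} dy/\sqrt{1-y^{2}} = \pi$.

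For $m\in[0,1)$, $q\in(1,2]$, I would use the explicit split $H(m,q) = H_{+}(m,q) + H_{-}(m,q)$ from the second form in the definition. On the positive half $[0,1]$ I would prove the pointwise inequality
\[
1 - z(m,q)(1-y^{q}) - y^{2} \;\leq\; (1-y)(y+m),
\]
which rearranges to $z(m,q)(1-y^{q}) \geq z(m,1)(1-y)$ and is an immediate consequence of the two elementary monotonicities $z(m,q) \geq z(m,1)$ (since $m^{q}\leq m$ for $q\geq 1$, $m\in[0,1]$) and $1-y^{q} \geq 1-y$ (since $y^{q}\leq y$ on $[0,1]$), both strict on $(0,1)$ when $m<1$ and $q>1$. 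This yields $H_{+}(m,q) > H_{+}(m,1)$ with a strict gain.

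The main obstacle is the negative half: the analogous pointwise inequality fails on $[-m,0]$, as can be checked, e.g., at $m=1/2$, $q=2$, $y=-5/12$. To handle $H_{-}$ I would try two routes. The first is to rescale $y=-mu$ in $H_{-}$, which recasts it as an integral over $[0,1]$ of the same shape as $H_{+}$ but with effective coefficient $-B$, where $B = (1-m^{2})m^{q-2}/(1+m^{q})$, and then to recombine the two halves via a matching trigonometric substitution aligned with the $q=1$ parametrization. The second is to differentiate $H(m,q)$ in $q$ under the integral sign, using $\partial_{q} z(m,q) = -z(m,q)\,m^{q}\ln m/(1+m^{q})$ and $\partial_{q}(|y|^{q-1}y) = |y|^{q-1}y\,\ln|y|$, and to exhibit the cancellations between the two halves that force $\partial_{q} H(m,q) \geq 0$. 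Either route has to exploit the structural identity that the radicand vanishes at both $y=-m$ and $y=1$, which is the very definition of $z(m,q)$. Once the combined bound $H(m,q)\geq\pi$ is in hand, the strict gain already obtained on $[0,1]$ upgrades it to $H(m,q) > \pi$ under $m<1$, $q>1$, and the final conclusion --- that $H(m,q) = \pi$ with $q>1$ forces $m=1$ --- is the contrapositive.
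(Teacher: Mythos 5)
Your treatment of the two boundary cases is correct: for $q=1$ the radicand factors as $(1-y)(y+m)$ and the trigonometric substitution gives $H(m,1)=\pi$, and for $m=1$ one has $z(1,q)=0$ so $H(1,q)=\int_{-1}^{1}(1-y^{2})^{-1/2}dy=\pi$. Your pointwise comparison on the positive half is also sound, and you are right that the analogous pointwise inequality fails on $[-m,0]$ (your numerical counterexample checks out: the factors $z(m,q)\ge z(m,1)$ and $1+|y|^{q}\le 1+|y|$ pull in opposite directions there). But this is exactly where the proof stops being a proof: the negative half is the entire difficulty, and you only name two possible routes without carrying either out. The paper's argument is precisely your ``second route'' executed in full: it differentiates the integrand $h(m,q,y)=F_{I}^{-1}+mF_{II}^{-1}$ in $q$ and shows $\partial_{q}h>0$ by combining the two halves \emph{pointwise in $y$} (not after integration), using the bounds $F_{I}\le\sqrt{1-y^{2}}$ and $F_{II}\ge m\sqrt{1-y^{2}}$ to reduce the sign question to an explicit inequality $g(m,q,y)>0$, which is then proved by showing $g(m,q,\cdot)$ is strictly decreasing with $g(m,q,1)=0$; that last monotonicity itself requires a further nontrivial estimate ($\ell(m,q)>1$). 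None of this cancellation analysis is present in your proposal, so the central claim $H(m,q)\ge\pi$ for $m<1$, $q>1$ remains unproved.

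There is also a secondary logical flaw in your closing step. You argue that once the combined bound $H(m,q)\ge\pi$ is established, the strict gain $H_{+}(m,q)>H_{+}(m,1)$ upgrades it to $H(m,q)>\pi$. That does not follow: if the (unproved) combined argument only yields $H_{+}(m,q)+H_{-}(m,q)\ge H_{+}(m,1)+H_{-}(m,1)$, a strict gain on the positive half could in principle be exactly offset by a loss on the negative half. Strictness has to come from the combined mechanism itself --- in the paper it comes from the strict pointwise inequality $\partial_{q}h>0$ for $y\in\left]0,1\right[$ --- not from the positive half alone.
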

\begin{rem}
{%
Let us explicitly observe that in the case $\alpha\le 0$, it holds that
 $\lambda(\alpha,q)\le \frac{\pi^{2}}{4}<H(m,q)^{2}$ for any $m\in[0,1]$ and $q\in[1,2]$.
 }
\end{rem}
\begin{rem}
\label{rem3.5}
The proof of Proposition \ref{propminimo} is based on the study of the integrand function that defines $H(m,q)$, that is
\begin{equation*}
h(m,q,y):=\frac{1}{ \sqrt{1-z(m,q)(1-y^{q}) - y^2 }}
+\frac{m}{ \sqrt{1-z(m,q)(1+ m^{q}y^{q})- m^{2}y^2 }}.
\end{equation*}
Let us explicitly observe that if $m=1$, then $z(1,q)=0$ and
\[
h(1,q,y)=\frac{2}{ \sqrt{1- y^2 }}, 
\]
that is constant in $q$. Moreover, if $y=0$, then
\[
h(m,q,0)=\frac{1+m}{\sqrt{1-z(m,q) }}
\]
that is strictly increasing in $q\in[1,2]$. Furthermore, simple computations yield
\[
\left\{
\begin{array}{ll}
H(1,q)=\pi, & H(0,q)=\dfrac{\pi}{2-q}\ge \pi\quad(H(0,2)=+\infty), \\[.5cm]
H(m,1)=\pi, & H(m,2)=\ds\frac\pi2 \sqrt{\frac{1+m^{2}}{2}}\left(\frac 1m +1\right)\ge \pi.
\end{array}
\right.
\]
\end{rem}
To prove Proposition \ref{propminimo}, it is sufficient to show that $h$ is monotone in $q$.

\begin{lem}
\label{lemma-monotonia}
For any fixed $y\in[0,1[$ and $m\in]0,1[$, the function $h(m,\cdot,y)$ is strictly increasing as $q\in[1,2]$.
\end{lem}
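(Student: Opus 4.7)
The plan is to compute $\partial_{q} h(m,q,y)$ explicitly and show it is strictly positive. Writing $\phi_{1}(q)=1-z(q)(1-y^{q})-y^{2}$ and $\phi_{2}(q)=1-z(q)(1+m^{q}y^{q})-m^{2}y^{2}$ so that $h=\phi_{1}^{-1/2}+m\phi_{2}^{-1/2}$, I would first use the identity $z'(q)=-\,z\,m^{q}\ln m/(1+m^{q})$ to compute $\partial_{q}\phi_{1}$ and $\partial_{q}\phi_{2}$. After cancellation one obtains the compact form
\[
2\,\partial_{q}h \;=\; \frac{P+Q}{\phi_{1}^{3/2}}\;+\;\frac{m\,(P-m^{q}Q)}{\phi_{2}^{3/2}},
\]
where $P:=z'(q)(1-y^{q})\ge 0$ and $Q:=z\,y^{q}(-\ln y)\ge 0$, both nonnegative because $m,y\in(0,1)$ force $z'>0$ and $-\ln y\ge 0$. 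Note that for $y\in[0,1)$ one has $P>0$ strictly.

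The first fraction is manifestly positive; the subtlety is that the coefficient $P-m^{q}Q$ in the second one may be negative. To handle this, I would reorganise as
\[
2\,\partial_{q}h \;=\; P\!\left(\frac{1}{\phi_{1}^{3/2}}+\frac{m}{\phi_{2}^{3/2}}\right)\;+\;Q\!\left(\frac{1}{\phi_{1}^{3/2}}-\frac{m^{q+1}}{\phi_{2}^{3/2}}\right),
\]
so the $P$-bracket is always strictly positive, and the $Q$-bracket has to be dealt with by relating $\phi_{2}$ to $\phi_{1}$. A clean way to compare $\phi_{1}$ and $\phi_{2}$ is to use the decomposition
\[
\phi_{1}=(1-z)(1-y^{q})+(y^{q}-y^{2}),\qquad \phi_{2}=(1-z)(1-y^{q})+m^{2}(y^{q}-y^{2}),
\]
with $1-z=(m^{2}+m^{q})/(1+m^{q})$, noting that $y^{q}-y^{2}\ge 0$ precisely because $q\le 2$. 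This immediately yields $\phi_{2}\ge m^{2}\phi_{1}$.

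The heart of the proof is to show that, after clearing denominators, the inequality reduces to
\[
(U+V)\,\phi_{2}^{3/2}\;+\;m\,(U-m^{q}V)\,\phi_{1}^{3/2} \;>\;0,
\]
with $U:=m^{q}(-\ln m)(1-y^{q})$ and $V:=(1+m^{q})y^{q}(-\ln y)$, both nonnegative. When $U\ge m^{q}V$ this is immediate; when $U<m^{q}V$, I would close the estimate by combining the ratio bound for $\phi_{2}/\phi_{1}$ coming from the decomposition above with the explicit form of $U/V$, both expressed through $a:=1-y^{q}$ and $c:=y^{q}-y^{2}$.

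The main obstacle I expect is this last step: the rough inequality $\phi_{2}\ge m^{2}\phi_{1}$ (equivalently $(\phi_{1}/\phi_{2})^{3/2}\le m^{-3}$) is exactly borderline in the limit $y\to 1$, $m\to 1$ at $q=1$, so a naive bound is not sufficient and one must exploit that the ratios $U/V$ and $\phi_{2}/\phi_{1}$ move in a correlated way as $y$ varies. Once that pointwise algebraic estimate is established, strict positivity of $\partial_{q}h$ on $(0,1)\times[1,2]$ follows, giving the claimed strict monotonicity.
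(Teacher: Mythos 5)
Your computation of $\partial_q h$ is correct and, after the substitutions $P=\tfrac{z}{1+m^q}U$ and $Q=\tfrac{z}{1+m^q}V$, coincides exactly with the formula in the paper: your target inequality $(U+V)\phi_2^{3/2}+m(U-m^qV)\phi_1^{3/2}>0$ is precisely the paper's $h_1F_I^{-3}+m^{q+1}h_2F_{II}^{-3}>0$ with $h_1=U+V$ and $h_2=m^{-q}(U-m^qV)$, and your case split ($U\ge m^qV$ easy, $U<m^qV$ hard) is the paper's split into the sets $A$ and $B$. The decomposition $\phi_1=(1-z)(1-y^q)+(y^q-y^2)$, $\phi_2=(1-z)(1-y^q)+m^2(y^q-y^2)$ is a nice way to read off $\phi_1\le 1-y^2$ and $\phi_2\ge m^2(1-y^2)$, which are the bounds the paper states directly. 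So up to this point the two arguments are the same.

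The problem is that you stop exactly where the real work begins. In the hard case you announce that you ``would close the estimate by combining the ratio bound for $\phi_2/\phi_1$ with the explicit form of $U/V$'' and that positivity ``follows once that pointwise algebraic estimate is established'' --- but that estimate \emph{is} the content of the lemma, and it is never proved. Incidentally, the ``naive'' bound you dismiss is precisely what the paper uses: inserting $\phi_1\le 1-y^2$ and $\phi_2\ge m^2(1-y^2)$ reduces everything to the $\phi$-free inequality $g(m,q,y):=(U+V)+m^{-2}(U-m^qV)>0$; the bound is tight only in the limit $y\to1$, where $g\to0$, so nothing is lost. But proving $g>0$ is not a routine manipulation: the paper shows $\partial_y g<0$ on $]0,1[$ together with $g(m,q,1)=0$, and the monotonicity in $y$ itself hinges on the auxiliary inequality $\ell(m,q)=\frac{(m^{q}+m^{q-2})\log(1/m)}{(1+m^{q})(m^{q-2}-1)}>1$, which is in turn reduced to the positivity of $\mu(m,q)=m^{q}\bigl(\log\frac 1m+1\bigr)+m^{q-2}\bigl(\log\frac1m-1\bigr)+1-m^{2q-2}$. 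None of this chain, nor any substitute for it, appears in your proposal, so as written the proof has a genuine gap at its central step.
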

\begin{proof}
From the preceding observations,  we may assume $m\in]0,1[$ and $y\in]0,1[$.
Differentiating in $q$, we have that
\begin{equation*}
\begin{split}
\de_{q}h = &- \frac {1}{2F_{I}^{3}} \big[-(1-y^{q})\de_{q} z + z\, y^{q}\log y \big]+\\[.2cm]
&- \frac {m}{2F_{II}^{3}} \big[
-(1+m^{q}y^{q})\de_{q}z- z\,m^{q}y^{q}(\log m +\log y)
\big],
\end{split}
\end{equation*}
where
\begin{equation}
\label{firint}
F_{I}(m,q,y):= \sqrt{1-z(m,q)(1- y^{q}) - y^2} \le \sqrt{1-y^2},
\end{equation}
and 
\begin{equation}
\label{secint}
F_{II}(m,q,y):=\sqrt{1-z(m,q)(1+m^{q}y^{q})-m^{2}y^{2}}\ge m{\sqrt{1-y^{2}}}.
\end{equation}
Being 
\[
z=\frac{1-m^{2}}{1+m^{q}},\quad
\de_{q}z= -\frac{1-m^{2}}{(1+m^{q})^{2}}m^{q}\log m,
\]
we have that
\begin{equation*}
\begin{split}
\de_{q}h = \frac12\frac{1-m^{2}}{(1+m^{q})^{2}}\bigg\{ &\overbrace{\bigg[ -(1-y^{q})m^{q}\log m -y^{q}(1+m^{q})\log y\bigg]}^{h_{1}(m,q,y)}\frac{1}{F_{I}^{3}}+ \\[.2cm]
+& \underbrace{\bigg[-(1+m^{q}y^{q})\log m+(1+m^{q})y^{q}(\log m +\log y)\bigg]}_{h_{2}(m,q,y)}\frac {m^{q+1}}{F_{II}^{3}}\bigg\}.
\end{split}
\end{equation*}

Let us observe that $h_{1}(m,q,y)\ge 0$. Hence, in the set $A$ of $(q,m,y)$ such that $h_{2}(m,q,y)$ is nonnegative, we have that $\de_{q} h(q,m,y) \ge 0$. Moreover, $h_{1}(q,m,y)$ cannot vanish ($y<1$), then $\de_{q}h>0$ in $A$.

Hence, let us consider the set $B$ where
\[
h_{2}=(y^{q}-1)\log m+(1+m^{q})y^{q}\log y\le 0
\] 
(observe that in general $A$ and $B$ are nonempty). 
By \eqref{firint} and \eqref{secint} we have that
\begin{equation*}
\begin{split}
\de_{q}h \ge \frac12\frac{1-m^{2}}{(1+m^{q})^{2}}\bigg\{ &\bigg[ -(1-y^{q})m^{q}\log m -y^{q}(1+m^{q})\log y\bigg]\frac{1}{(1-y^{2})^{\frac32}}+ \\[.2cm]
+& \bigg[(y^{q}-1)\log m+(1+m^{q})y^{q}\log y\bigg]\frac {m^{q-2}}{(1-y^{2})^{\frac32}}\bigg\}.
\end{split}
\end{equation*}
Hence, to show that $\de_{q}h> 0$ also in the set $B$ it is sufficient to prove that
\begin{multline}
\label{g>0}
g(m,q,y):=\bigg[ -(1-y^{q})m^{q}\log m -y^{q}(1+m^{q})\log y\bigg]+\\
+ \bigg[(y^{q}-1)\log m+(1+m^{q})y^{q}\log y\bigg]m^{q-2}> 0
\end{multline}
when $m\in]0,1[$, $q\in[1,2]$ and $y\in]0,1[$.\\

{\bf Claim 1.} {\em For any $q\in [1,2]$ and $m\in]0,1[$, the function $g(m,q,\cdot)$ is strictly decreasing for $y\in]0,1[$.} 
\\

To prove the Claim 1, we differentiate $g$ with respect to $y$, obtaining
\begin{multline*}
\de_{y}g= \bigg[qy^{q-1}m^{q}\log m-qy^{q-1}(1+m^{q})\log y-y^{q-1}(1+m^{q})\bigg]+\\
+\bigg[qy^{q-1}\log m+(1+m^{q})(qy^{q-1}\log y+y^{q-1})
\bigg]m^{q-2}=\\
=y^{q-1}\bigg[q (m^{q} +m^{q-2})\log m+q(1+m^{q})(m^{q-2}-1)\log y + (1+m^{q})(m^{q-2}-1)\bigg].
\end{multline*}
Then $\de_{y} g < 0$ if and only if
\begin{equation*}
(1+m^{q})(m^{q-2}-1)(q\log y+1) < -q(m^{q}+m^{q-2})\log m.
\end{equation*}
The above inequality is true, as we will show that (recall that $0<m<1$ and $1\le q\le 2$)
\begin{equation}
\label{ineqlog}
\log y < -\frac1q +\frac{(m^{q}+m^{q-2})\log m}{(1+m^{q})(1-m^{q-2})}=:-\frac1q+\ell(m,q). 
\end{equation}
If the the right-hand side of \eqref{ineqlog} is nonnegative, then for any $y\in ]0,1[$ the inequality \eqref{ineqlog} holds.\\

{\bf Claim 2.} {\em For any $q\in [1,2]$ and $m\in]0,1[$, $\ell(m,q)> \frac1q$.} \\

We will show that
\[
\ell(m,q) > 1 \ge \frac 1q.
\]
We have
\[
\ell(m,q)= \frac{(m^{q}+m^{q-2})}{(1+m^{q})(m^{q-2}-1)}\log\frac{1}{m} > 1
\]
if and only if
\begin{multline*}
\mu(m,q)=(m^{q}+m^{q-2})\log\frac{1}{m} - (1+m^{q})(m^{q-2}-1)=\\
=(m^{q}+m^{q-2})\log\frac{1}{m} + 1+m^{q}-m^{q-2}-m^{2q-2}=\\
= m^{q}\left(\log\frac 1m+1\right)+ m^{q-2}\left(\log\frac1m-1\right)+1-m^{2q-2}
 > 0.
\end{multline*}
Then for $m \in]0,1[$ we have
\begin{multline*}
\mu(m,q)=m^{q}\left(\log\frac 1m+1\right)+ m^{q-2}\left(\log\frac1m-1\right)+1-m^{2q-2}\\
\ge m^{q}\left(\log\frac 1m+1\right)+ m^{q-2}\left(\log\frac1m-1\right)=\\=
m^{q-2}\left(m^{2}\left(\log\frac 1m+1\right)+\log\frac 1m-1\right)> 0,
\end{multline*}
and the Claim 2, and then the Claim 1, are proved. To conclude the proof of \eqref{g>0}, it is sufficient to observe that 
\[
g(m,q,y)> g(m,q,1)=0
\]
when $m\in]0,1[$, $q\in[1,2]$ and $y\in]0,1[$.

The Claim 1 gives that $\de_{q}h(m,q,y)> 0$ when $m\in]0,1[$, $q\in[1,2]$ and $y\in]0,1[$, and this conclude the proof.

%
%
%

\end{proof}

\begin{proof}[Proof of Proposition \ref{propminimo}]
Using Lemma \ref{lemma-monotonia} and Remark \ref{rem3.5}, it holds that
\[
H(m,q) \ge H(m,1)=\pi
\]
for $1\le q \le 2$. In particular, if $q\in]1,2]$, $m\in[0,1[$ and $y\in ]0,1[$ then
\[
h(m,q,y)>h(m,1,y),
\]
hence for any $m\in[0,1[$ and $q\in]1,2]$ it holds
\[
H(m,q)>H(m,1)=\pi.
\]
\end{proof}
Now we are in position to prove Proposition \ref{cambiosegno}. 
\begin{proof}[Proof of Proposition \ref{cambiosegno}]
Let $y$ be a minimizer of $\lambda(\alpha,q)$ that changes sign in $[-1,1]$, with $\max_{x\in[-1,1]} y(x)=1$.
By  {\it (d1)} of Proposition \ref{cambiosegno0} and Proposition \ref{propminimo}, the eigenvalue $\lambda(\alpha,q)$ has to satisfy the inequality
\[
\lambda(\alpha,q)\ge \pi^{2}.
\]
Hence, by \eqref{nonloc-twist}  it follows that
\[
\lambda(\alpha,q) = \pi^{2},
\]
that is property {\it (a2)}.
Assuming also $1<q\le 2$, if $-\bar m$ is the minimum value of $y$, again by Proposition \ref{propminimo} and {\it (d1)} of Proposition \ref{cambiosegno0}, 
$\lambda(\alpha,q)=\pi^{2}$ if and only if $\bar m=1$. Hence, $z(1,q)=0$ and the first identity of \eqref{costnl} gives that 
\[
\int_{-1}^{1} y|y|^{q-1}dx=0.
\]
and hence {\it (b2)} follows.

To prove {\it (c2)}, let us explicitly observe that, when \eqref{q-average} holds, 
 $y$ solves
\[
\begin{cases}
y''+\pi^{2}y=0&\text{in }]-1,1[\\
y(-1)=y(1)=0.&\\
\end{cases}
\]
Hence {$y(x)=C\sin \pi x$, with $C\in \R\setminus\{0\}$}.
%
\end{proof}

\section{Proof of the main results}

Now we are in position to prove the first main result.
\begin{proof}[Proof of Theorem \ref{mainthm1} and Theorem \ref{mainthm2}]
We begin the proof with the following claim.\\

{\bf Claim.}
{\it There exists a positive value of $\alpha$ such that the minimum problem
\begin{equation*}
\lambda(\alpha,q)=\min_{u \in H_0^1([-1,1])}\frac{\ds \int_{-1}^{1} | u'|^2\  dx+\alpha \left|\int_{-1}^{1} u|u|^{q-1}\  dx\right|^{\frac{2}{q}}}{\ds\int_{-1}^{1} u^2\  dx}
\end{equation*}
admits an eigenfunction $y$ that satisfies $\int_{-1}^1y|y|^{q-1}\  dx=0$ In such a case, $\lambda(\alpha,q)=\pi^{2}$ and, up to a multiplicative constant, $y=\sin \pi x$.
} 

To prove the claim, let us consider the case $1<q\le 2$. By contradiction, we suppose that for any $k\in\N$, there exists a divergent sequence $\alpha_{k}$, and a corresponding sequence of eigenfunctions $\{y_{k}\}_{k\in\N}$ relative to $\lambda(\alpha_{k},q)$ such that $\int_{-1}^1y_{k}|y_{k}|^{q-1}dx>0$ and $\|y_k\|_{L^2(-1,1)}=1$. By Proposition \ref{cambiosegno}, these eigenfunctions do not change sign and, as we have already observed, $\lambda(\alpha_{k},q)\le\pi^{2}$. It holds that
\begin{equation}
\label{contrad}
\ds \int_{-1}^{1} | y_k'|^2\  dx+\alpha_k \left(\int_{-1}^{1} |y_k|^{q}\  dx\right)^{\frac{2}{q}}\le\pi^{2}.
\end{equation}
Hence, $y_k$ converges (up to a subsequence) to a function $y\in W_0^{1,2}(-1,1)$, strongly in $L^2(-1,1)$ and weakly in $H_{0}^{1}(-1,1)$. Moreover $\|y\|_{L^2(-1,1)}=1$ and $y$ is not identically zero. Hence $\|y\|_{L^q(-1,1)}>0$. Therefore, letting $\alpha_k\rightarrow +\infty$ in \eqref{contrad} we have a contradiction and the claim is proved.

Now, we recall that for any $1\le q \le2$, $\lambda(\alpha,q)$ is a nondecreasing Lipschitz function in $\alpha$, and for $\alpha$ sufficiently large, $\lambda(\alpha,q)=\pi^{2}$.  Hence, using the Claim 1, we can define 
\[
\alpha_{q}=\min\{\alpha\in\R\colon \lambda(\alpha,q)=\pi^{2}\}=\sup\{\alpha\in\R\colon \lambda(\alpha,q)<\pi^{2}\}.
\]

Obviously, $\alpha_{q}>0$. If $\alpha<\alpha_{q}$, then the minimizers corresponding to $\lambda(\alpha,q)$ has constant sign, otherwise $\lambda(\alpha,q)=\pi^{2}$. If $\alpha>\alpha_{q}$, then any minimizer $y$ corresponding to $\alpha$ is such that $\int_{-1}^{1}|y|^{q-1}y\,dx=0$. Indeed, if we assume, by contradiction, that there exist $\bar \alpha>\alpha_{q}$ and $\bar y$ such that $\int_{-1}^{1}|\bar y|^{q-1}\bar y\,dx>0$, $\|y\|_{L^2}=1$ and $\mathcal Q[\bar\alpha,\bar y]=\lambda(\bar\alpha,q)$, then 
\begin{align*}
\mathcal Q[\bar\alpha-\eps,\bar y] &=
\mathcal Q[\bar\alpha,\bar y]-\eps\left(\int_{-1}^{1}|\bar y|^{q-1}\bar y\,dx\right)^{\frac q 2}\\ &=\lambda(\bar\alpha,q)-\eps\left(\int_{-1}^{1}|\bar y|^{q-1}\bar y\,dx\right)^{\frac q 2}<\lambda(\bar\alpha,q).
\end{align*}
Hence, for $\eps$ sufficiently small, $\pi^{2}=\lambda(\alpha_{q},q)\le\lambda(\bar\alpha-\eps,q)<\lambda(\bar\alpha,q)$ and this is absurd.
{Finally, by (c2) of Proposition \ref{cambiosegno}, the proof of Theorem \ref{mainthm1} is completed. It is not difficult to see, by means of approximating sequences, that $\lambda(\alpha_{q},q)$ admits both a nonnegative minimizer and a minimizer with vanishing $q$-average, that gives the thesis of Theorem \ref{mainthm2}, in the case $1<q\le 2$. To conclude the proof of Theorem \ref{mainthm2}, we have to study the behavior of the solutions when $q=1$ and $q=2$.
Despite its simplicity, the case $q=1$ has a peculiar behavior. Let us recall that $\lambda(0,1)=\frac{\pi^{2}}{4}$, and, being $\lambda(\alpha,1)$ is Lipschitz, it assumes all the values in the interval $\left]\frac{\pi^{2}}{4},\pi^{2}\right]$ as $\alpha$ varies in $]0,+\infty[$.
}

Suppose that $\frac{\pi^{2}}{4}<\lambda(\alpha,1)<\pi^{2}$. Then $0<\alpha<\alpha_{1}$, the corresponding minimizer $y$ has constant sign in $]-1,1[$, and it is a solution of
\begin{equation*}
\left\{
\begin{array}{ll}
y''+\lambda y=\alpha \gamma &\text{in }]-1,1[\\
y(-1)=y(1)=0,
\end{array}
\right.
\end{equation*}
where $\lambda=\lambda(\alpha,1)$ and $\gamma=\ds\int_{-1}^{1}y(x)dx>0$. Hence
\[
y(x)=\frac{\gamma \alpha}{\lambda}\left(1-\frac{\cos(\sqrt \lambda x)}{\cos\sqrt\lambda}\right),\quad x\in[-1,1].
\]
Integrating both sides in $[-1,1]$, we get
\[
\alpha=\frac{\lambda\sqrt\lambda}{2\sqrt\lambda-2{\tan{\sqrt\lambda}}},
\]
and, letting $\lambda\to {\pi^{2}}$, 
\[
\alpha_{1}= \frac{\pi^{2}}{2}.
\]

Finally, in the critical case $\alpha=\alpha_{1}=\frac{\pi^{2}}{2}$,  an immediate computation shows that the functions
\[
y_{A}(x)= \frac{A}{2}\left( 1+\cos \left( \pi x \right)\right) -\sqrt{1-A}\sin \left( \pi x \right),
\]
with $A\in[0,1]$ have average $\gamma=A$ and $y_{A}$ are minimizers of $\lambda(\alpha_{1},1)=\pi^{2}$. Moreover, when $A$ varies in $[0,1[$, the root of $y_{A}$ in $]-1,1[$ varies continuously in $[0,1[$.

It remains to consider the case $q=2$. If $\alpha=\alpha_2$, the corresponding positive minimizer $y$ is a solution of
\begin{equation*}
\left\{
\begin{array}{ll}
y''+\pi^2 y=\alpha_2 y &\text{in }]-1,1[\\
y(-1)=y(1)=0.
\end{array}
\right.
\end{equation*}
 The positivity of the eigenfunction guarantees that 
\begin{equation*}
\alpha_2-\pi^2=\lambda(0,2)=\frac{\pi^2}{4},
\end{equation*}
hence $\alpha_{2}=\frac{3}{4}\pi^2$.
\end{proof}


\begin{rem}
When $1<q<2$, it is possible to obtain the following lower bound on $\alpha_q$:
	\begin{equation}
	\label{stimaq}
	\alpha_{q}\ge \frac{3}{2^{1+\frac 2q}}\pi^{2}.
	\end{equation}
	To get the estimate \eqref{stimaq}, by choosing $u(x)=\cos\frac\pi2x$ as test function we get
	\begin{equation*}
	\pi^{2}=\lambda(\alpha_{q},q)\le\mathcal{Q}[u,\alpha_{q}]=\frac{\pi^{2}}{4}+\alpha_{q}\left(\int_{-1}^{1}u^{q}dx\right)^{2/q}\le \frac{\pi^{2}}{4} +\alpha_{q}2^{\frac2q-1}.
	\end{equation*}
\end{rem}

\end{document}